\documentclass[12pt]{article}

\usepackage{url}
\usepackage{amsthm,amsmath,amssymb}
\usepackage{enumerate}
\usepackage[colorlinks=true,citecolor=black,linkcolor=black,urlcolor=blue]{hyperref}
\usepackage{bm}
\usepackage[colorlinks=true,citecolor=black,linkcolor=black,urlcolor=blue]{hyperref}
\usepackage{graphicx}
\usepackage{subfigure}
\usepackage{tikz}

\newcommand{\Mod}[1]{\ \mathrm{mod}\ #1}
\numberwithin{equation}{section}

\theoremstyle{plain}
\newtheorem{theorem}{Theorem}[section]
\newtheorem{lemma}[theorem]{Lemma}
\newtheorem{corollary}[theorem]{Corollary}
\newtheorem{remark}[theorem]{Remark}

\title{\bf Humps in Motzkin paths and standard Young tableaux in a $(2,1)$-hook}

\author{Xiaomei Chen\\
\small School of Mathematics and Statistics\\
\small Hunan University of Science and Technology\\
\small Xiangtan 411201, China\\
\small\tt xmchen@hnust.edu.cn\\
}

\date{
\small Mathematics Subject Classifications: 05A15}
\DeclareMathOperator{\SYT}{SYT}

\begin{document}
\begin{sloppypar}

\maketitle
\begin{abstract}
  We calculate the number of humps and peaks in Motzkin paths with a given height, and calculate the number of standard Young tableaux (SYTs) in a $(2,1)$-hook with the difference of the first two parts fixed, which refine Regev's results in 2009. We also give new combinatorial proofs of Regev's results, and reveal some new recurrence relations related to humps, free Motzkin paths and SYTs.

\bigskip\noindent \textbf{Keywords:} Motzkin paths, humps, peaks, standard Young tableaux, Motzkin prefixes, Riordan arrays\\
\bigskip\noindent \textbf{Mathematics Subject Classifications: 05A15} 
\end{abstract}

\section{Introduction}
A \emph{Motzkin path} of order $n$ is a lattice path from $(0,0)$ to $(n,0)$, using up steps $(1,1)$ (denoted by $U$), down steps $(1,-1)$ (denoted by $D$) and flat steps $(1,0)$ (denoted by $F$), and never going below the $x$-axis. If such a path is allowed to go below the $x$-axis, it is called a \emph{free Motzkin path}. Let $\mathcal{M}_n$ denote the set of all Motzkin paths of order $n$. The cardinality of $\mathcal{M}_n$ is the $n$th Motzkin number $m_n$ (\cite{Sloane}, A001006).

A \emph{hump} in a Motzkin path is an up step followed by zero or more flat steps followed by a down step. Especially, a hump with no flat steps is called a \emph{peak}. See Figure \ref{fig:0} for an example.
\begin{figure}[ht]
  \centering
  \begin{tikzpicture}[scale=0.6]
    \node[fill,circle,scale=0.5] at (0,0) (A1){};
    \node[fill,circle,scale=0.5] at (1,1) (A2){};
    \node[fill,circle,scale=0.5] at (2,1) (A3){};
    \node[red,fill,circle,scale=0.5] at (3,2) (A4){};
    \node[red,fill,circle,scale=0.5] at (4,2) (A5){};
    \node[red,fill,circle,scale=0.5] at (5,2) (A6){};
    \node[fill,circle,scale=0.5] at (6,1) (A7){};
    \node[blue,fill,circle,scale=0.5] at (7,0) (A8){};
    \node[blue,fill,circle,scale=0.5] at (8,1) (A9){};
    \node[blue,fill,circle,scale=0.5] at (9,0) (A10){};

    \node[above=5pt] at (4,2) (){$P_1$};
    \node[above=5pt] at (8,1) (){$P_2$};
    \draw plot coordinates {(A1) (A2) (A3)};
    \draw[red] plot coordinates {(A3) (A4) (A5) (A6) (A7)};
    \draw plot coordinates {(A7) (A8)};
    \draw[blue] plot coordinates {(A8) (A9) (A10)};
  \end{tikzpicture}
  \caption{An example of hump and peak, where the hump $P_1$ is colored red, and the peak $P_2$ is colored blue.}
\label{fig:0} 
\end{figure}
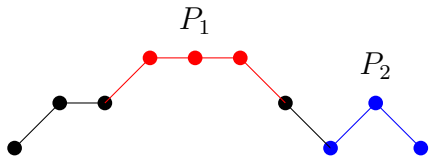

Let $H_{n}$ denote the total number of humps in all Motzkin paths of order $n$. Let $S_n$ denote the number of free Motzkin paths of order $n$. By applying the WZ method \cite{Petkovesk,Zeilberger}, Regev \cite{Regev1} proved that 
\begin{equation}\label{equ:1}
  H_n=\frac{1}{2}(S_n-1)=\frac{1}{2}\left(\sum_{j\geq 1}\binom{n}{j}\binom{n-j}{j}\right).
\end{equation}
A combinatorial proof of (\ref{equ:1}) was given by Ding and Du \cite{Ding}, and generalizations of this study to $(k,a)$-paths were given in \cite{Du,Mansour,Yan}.

Regev pointed out that (\ref{equ:1}) is also related to the number of standard Young tableaux (SYT) in a $(2,1)$-hook, where a $(k,l)$-hook is a partition $\lambda=(\lambda_1,\lambda_2,\dots)$ with $\lambda_{k+1}\leq l$. Let $\SYT(k,l;n)$ denote the number of SYTs of order $n$ in a $(k,l)$-hook. It is obvious that $\SYT(1,1;n)=2^{n-1}$. For the ``hook'' sums of general shapes, only the following result of Regev \cite{Regev1,Regev2} is known.
\begin{equation}\label{equ:2}
  \SYT(2,1;n)-1=H_n=\frac{1}{2}\left(\sum_{j\geq 1}\binom{n}{j}\binom{n-j}{j}\right).
\end{equation}
Du and Yu \cite{Du2} gave a combinatorial proof of (\ref{equ:2}) by proving bijectively that 
$$\SYT(2,1;n)=\frac{1}{2}(S_n+1).$$

The goal of this paper is to reveal some further relations between humps of Motzkin paths and Young tableaux, and obtain some refinements of identities (\ref{equ:1}) and (\ref{equ:2}). The statistic \emph{height} for a hump in a Motzkin path is defined to be the $y$-coordinate reached by the up step of the hump. Note that the number of peaks in Dyck paths with a given height has been studied in several papers, such as \cite{Mansour2} and \cite{Peart}. We denote
$$\mathcal{H}_{n,k}=\{(M,P)\mid M\in \mathcal{M}_n, \text{$P$\ is a hump of $M$ with height $k$}\},$$
$$\mathcal{P}_{n,k}=\{(M,P)\mid M\in \mathcal{M}_n, \text{$P$\ is a peak of $M$ with height $k$}\}.$$
Let $H_{n,k}$ and $P_{n,k}$ denote the cardinality of $\mathcal{H}_{n,k}$ and $\mathcal{P}_{n,k}$ respectively.

To count humps with a given height, we consider the relation between humps and Motzkin prefixes. Here a \emph{Motzkin prefix} is a lattice path that is a prefix of a Motzkin path. We denote $\mathcal{M}_{n,k}$ the set of Motzkin prefixes from $(0,0)$ to $(n,k)$, and denote 
\begin{align*}
  \mathcal{M}_{n,k}^{*U}=&\{M\mid M\in \mathcal{M}_{n,k}\ \text{whose last non-flat step is $U$}\},\\
  \mathcal{M}_{n,k}^{D*}=&\{M\mid M\in \mathcal{M}_{n,k}\ \text{whose first non-up step is $D$}\}.
\end{align*}
See \cite{Krattenthaler} and \cite{Yaqubi} for some further results about Motzkin prefix numbers.  

By constructing a bijection from $\mathcal{H}_{n,k}$ to $\mathcal{M}_{n,2k}^{*U}$, we show that
  \begin{align}
    &H_{n,k}=S_{n,k},\label{equ:3}\\
    &H_{n,k}=\sum\limits_{i=2k-1}^{n-1}M_{i,2k-1},\label{equ:4}\\
    &M_{n,2k}=H_{n,k}+H_{n,k+1}.\label{equ:5}
  \end{align}
where $S_{n,k}$ denotes the number of free Motzkin paths of order $n$ whose smallest $y$-coordinate is $-k$ and last non-flat step is $U$, and $M_{n,k}$ denotes the cardinality of $\mathcal{M}_{n,k}$ (A064189 in OEIS \cite{Sloane}). Note that Equation (\ref{equ:3}) can be viewed as a generalization of Equation (\ref{equ:1}).

Our first main result gives the following formulas for $H_{n,k}$ and $P_{n,k}$.
\begin{theorem}\label{thm:22}
  Let $H_{n,k}$ (resp.\ $P_{n,k}$) be the total number of humps (resp. peaks) with height $k$ in all Motzkin paths of order $n$. Then for $n\geq 2$ and $k\geq 1$, we have
  \begin{align*}
    H_{n,k}&=\sum_{j=0}^{n-2k}\chi(j\equiv n \Mod 2)\frac{4k}{n-j+2k}\binom{n}{j}\binom{n-j-1}{(n-j)/2+k-1},\\
    P_{n,k}&=\sum_{j=0}^{n-2k}\chi(j\equiv n \Mod 2)\frac{4k}{n-j+2k}\binom{n-1}{j}\binom{n-j-1}{(n-j)/2+k-1},
  \end{align*}
  where $\chi$ is the characteristic function. Moreover, the number $j$ in the sum tracks up to the number of flat steps of the Motzkin paths. 
\end{theorem}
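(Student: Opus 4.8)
The plan is to reduce both formulas to a count of Motzkin prefixes organized by the number of flat steps, and then to a ballot-type enumeration that can be closed up with the reflection principle. First, for $HM_{n,k}$, I would invoke the bijection $\mathcal{HM}_{n,k}\to\mathcal{MP}_{n,2k}^{*U}$ already established as the paper's first result, so that $HM_{n,k}=\text{\ttfamily\#}\mathcal{MP}_{n,2k}^{*U}$, and then partition $\mathcal{MP}_{n,2k}^{*U}$ according to the number $j$ of flat steps. Writing $m=(n-j)/2$, a prefix with $j$ flats has $2m=n-j$ non-flat steps, necessarily $m+k$ up-steps and $m-k$ down-steps; this forces $j\equiv n \Mod 2$ and $0\le j\le n-2k$, which is exactly the range of the sum. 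The key structural observation is that the flat and non-flat steps decouple: the height at any point depends only on the non-flat subsequence, so the path stays weakly above the $x$-axis if and only if its non-flat subsequence does, and "last non-flat step is $U$" is a condition on that subsequence alone. Hence the count factors as $\binom{n}{j}$, the placement of the $j$ flats among $n$ positions, times $N_{m,k}$, the number of nonnegative $U/D$ lattice paths from height $0$ to height $2k$ with $m+k$ up-steps and $m-k$ down-steps ending in $U$.

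Second, I would compute $N_{m,k}$. Since such a path arises from a nonnegative path to height $2k-1$ of length $2m-1$ by appending $U$, the reflection principle gives $N_{m,k}=\binom{2m-1}{m-k}-\binom{2m-1}{m-k-1}$. Using the ratio of consecutive binomials this collapses to $N_{m,k}=\frac{2k}{m+k}\binom{2m-1}{m-k}$, and rewriting $\binom{2m-1}{m-k}=\binom{2m-1}{m+k-1}=\binom{n-j-1}{(n-j)/2+k-1}$ together with $\frac{2k}{m+k}=\frac{4k}{n-j+2k}$ yields precisely the summand in the $HM_{n,k}$ formula.

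Third, for peaks I would run the same scheme, the only change being that a peak is a hump with no interior flat step. Tracking this condition through the bijection, a peak should correspond to a prefix of $\mathcal{MP}_{n,2k}^{*U}$ whose final step is $U$ (not merely whose last non-flat step is $U$); equivalently position $n$ is the terminal up-step, so the $j$ flats are placed among the first $n-1$ positions. This replaces $\binom{n}{j}$ by $\binom{n-1}{j}$ while leaving the ballot factor $N_{m,k}$ unchanged, giving the second formula. I would pin down the correct restriction against small cases, e.g. $n=3,\ k=1$, where the scheme must return $HM_{3,1}=3$ and $PM_{3,1}=2$.

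The main obstacle I anticipate is this third step: justifying that the "no interior flat" condition for a peak translates exactly into "the Motzkin prefix ends in an up-step," i.e. checking that the bijection of the first result behaves coherently with respect to the flat steps of the hump. If that coherence is awkward to extract, the fallback is a direct decomposition of a marked peak as $M=L\,UD\,R$, with $L$ a prefix reaching height $k-1$ and $R$ a suffix descending from height $k-1$ to $0$, which I expect to reproduce the same factorization by the same reflection computation.
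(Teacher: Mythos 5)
Your proposal is correct and follows essentially the same route as the paper: invoke the bijection of Lemma \ref{lem:21} (including its peak clause, which already gives you the ``ends with an up step'' characterization you were worried about), decouple the $j$ flat steps from the underlying Dyck prefix to get the factors $\binom{n}{j}$ and $\binom{n-1}{j}$, and multiply by the count of Dyck prefixes from $(0,0)$ to $(n-j,2k)$ ending in $U$. The only difference is that the paper cites this last count as a known result (OEIS A033184), whereas you rederive it by reflection, which merely makes the argument self-contained.
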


We also consider the relation between SYTs contained in a $(2,1)$-hook and Motzkin prefixes. Let $\SYT_k(2,1;n)$ denote the number of SYTs with shape $\lambda$, 
where $\lambda=(\lambda_1,\lambda_2,\dots)$ ranges over all $(2,1)$-hooks with $\mid \lambda\mid=n$ and $\lambda_1-\lambda_2=k$. We first show that
\begin{equation}
  \SYT_{2k-1}(2,1;n+1)+\SYT_{2k-1}(2,1;n)=H_{n+1,k}-H_{n,k}.
\end{equation}
Then in a combinatorial way, we obtain a new proof of (\ref{equ:2}), and obtain the following formula for $\SYT_k(2,1;n)$.
\begin{theorem}\label{thm:33}
  Let $\SYT_k(2,1;n)$ denote the number of SYTs with shape $\lambda$, 
where $\lambda=(\lambda_1,\lambda_2,\dots)$ ranges over all $(2,1)$-hooks with $\mid \lambda\mid=n$ and $\lambda_1-\lambda_2=k$. For integers $n,k$ with $n-2\geq k\geq 0$, we have
    \begin{align*}
      \SYT_k(2,1;n)&=(-1)^{n+k}+\sum_{i=0}^{[\frac{n-k-1}{2}]}\sum_{j=0}^{n-k-1-2i}\chi(j\equiv n+k-1 \Mod 2)\cdot\\
      &\frac{2k+2}{n+k+1-2i-j}\binom{n-2i-2}{j}\binom{n-2i-j-1}{(n+k-j-1)/2-i}.
    \end{align*}
\end{theorem}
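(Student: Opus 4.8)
The plan is to reduce Theorem~\ref{thm:33} to a first-order recurrence in $n$, solve it by an alternating telescoping, and then feed in the explicit ballot-type formula for Motzkin prefix numbers; the stated double sum then drops out from a single use of Pascal's rule together with the reflection $\binom{N}{t}=\binom{N}{N-t}$. First I would recast the SYT count as a statement about prefixes. For odd $k$, the displayed identity preceding the theorem, applied with parameter $(k+1)/2$, reads $S_k(2,1;n+1)+S_k(2,1;n)=HM_{n+1,(k+1)/2}-HM_{n,(k+1)/2}$, and by (\ref{equ:4}) the right-hand side telescopes to $MP_{n,k}$. Hence
\[
  S_k(2,1;n+1)+S_k(2,1;n)=MP_{n,k};
\]
for even $k$ I would obtain the same recurrence from the corresponding bijection (this is the single ingredient not already in the excerpt). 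Since no $(2,1)$-hook of size $k+1$ has row difference $k$, the base value is $S_k(2,1;k+1)=0$, and solving by alternating summation gives
\[
  S_k(2,1;n)=\sum_{m=k+1}^{n-1}(-1)^{\,n-1-m}\,MP_{m,k}.
\]

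Next I would make $MP_{m,k}$ explicit. Distributing the $j$ flat steps among the $m$ positions and counting the remaining $m-j$ up/down steps as a nonnegative ballot path ending at height $k$ gives $MP_{m,k}=\sum_{j}\binom{m}{j}C(m-j)$, where $C(\ell)=\frac{2(k+1)}{\ell+k+2}\binom{\ell}{(\ell-k)/2}$ is the ballot number (nonzero only for $\ell\ge k$ with $\ell\equiv k \Mod 2$). The decisive point is that the ballot weight depends on $m-j$ alone, so Pascal's rule $\binom{m}{j}=\binom{m-1}{j}+\binom{m-1}{j-1}$, after the shift $j\mapsto j-1$ in the second piece, collapses to
\[
  MP_{m,k}-MP_{m-1,k}=\sum_{j}\binom{m-1}{j}C(m-j).
\]
Pairing consecutive summands then rewrites the alternating sum as a sum of these differences.

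Finally I would reindex and clean up. Setting $m-1=n-2-2i$ in the difference identity produces the summand $\frac{2k+2}{n+k+1-2i-j}\binom{n-2i-2}{j}\binom{n-2i-j-1}{(n-k-j-1)/2-i}$; the reflection $\binom{N}{t}=\binom{N}{N-t}$ with $N=n-2i-j-1$ turns its lower index into $(n+k-j-1)/2-i$, while $\ell\equiv k\Mod 2$ becomes exactly $\chi(j\equiv n+k-1\Mod 2)$, matching the statement. The index range $0\le i\le[\frac{n-k-1}{2}]$ and the constant $(-1)^{n+k}$ encode the two ends of the pairing: the alternating sum of the base value $MP_{k,k}=1$ carried inside every $MP_m$ contributes $\frac{1+(-1)^{n+k}}{2}$, and the topmost difference term, which equals $C(k)=1$, supplies the remaining extension of the range; together they collapse to $(-1)^{n+k}$.

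The routine work is Pascal's identity and the binomial reflection. The main obstacle is twofold: securing the recurrence $S_k(2,1;n+1)+S_k(2,1;n)=MP_{n,k}$ for \emph{all} $k$ (the even case needs a bijection outside the excerpt), and the boundary bookkeeping---checking that the gap between the natural pairing range and the stated limit $[\frac{n-k-1}{2}]$ is absorbed precisely into the clean sign $(-1)^{n+k}$, with no leftover $k$-dependent remainder, which rests on the evaluations $MP_{k,k}=1$ and $C(k)=1$.
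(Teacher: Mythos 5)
Your proposal is correct, and beyond the shared foundation it follows a genuinely different route from the paper's. The recurrence $S_k(2,1;n+1)+S_k(2,1;n)=MP_{n,k}$ that you need for all $k$ is precisely the paper's Equation (\ref{rec:31}), proved there uniformly in $k$ (no parity split) by the bijection $U^rFM_1\mapsto U^{r+1}DM_1$; so the even-$k$ ingredient you flag as missing is available, and your odd-$k$ detour through $HM_{n,k}$ and (\ref{equ:4}) is not actually needed. After that point the two arguments diverge. The paper packages the alternating solution of this recurrence as the Riordan-array identity $S_k(x)=\frac{1}{1+x}\left(xM(x)\right)^{k+1}$ (Lemma \ref{lem:33}), reads $S_{n,k}$ as the signed sum $\sum_M(-1)^r$ over $M\in\mathcal{MP}_{n,k+1}^{*U}$ (with $r$ the number of trailing flat steps), cancels the negative terms by a sign-reversing bijection (move the last flat step to the front), and counts the surviving paths by inserting flat steps into Dyck prefixes ending with $U$; there $\binom{n-2i-2}{j}$ arises as a multiset coefficient for insertion slots and the ballot-type factor counts Dyck prefixes of length $n-2i-j$ to height $k+1$ ending in $U$. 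You instead evaluate $\sum_{m=k+1}^{n-1}(-1)^{n-1-m}MP_{m,k}$ head-on via the ballot formula for $MP_{m,k}$, Pascal's rule, and reflection (your reflected factor counts Dyck prefixes of length $n-1-2i-j$ to height $k$, which matches the paper's factor upon appending a final $U$). I verified your endpoint bookkeeping in both parity cases, and it closes exactly as you predict: the base values contribute $\frac{1+(-1)^{n+k}}{2}$; when $n+k$ is odd the theorem's extra $i=\frac{n-k-1}{2}$ term equals $C(k)=1$ and offsets the constant $-1$, while when $n+k$ is even the index ranges already coincide and the constant $+1$ is the base contribution (equivalently, pairing from the bottom, $MP_{k+1,k}=k+1$ versus $\binom{k}{1}C(k)=k$ accounts for the same unit). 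The trade-off between the approaches: the paper's involution disposes of all signs and boundary effects at once and gives every factor a combinatorial meaning, whereas your argument is more elementary---no generating functions and no involutions, only Pascal and reflection---at the price of the parity-dependent endpoint analysis that you correctly identified as the delicate step.
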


\section{The number of humps and peaks with height $k$}\label{sec:2}
A Motzkin prefix $M$ of order $n$ can be represented as a word $M=w_1w_2\cdots w_n$ with $w_i\in\{U,D,F\}$. We use $\overline{M}$ to denote the path obtained from $M$ by reading the steps in reverse order and then swapping the $U$'s and $D$'s.

\begin{lemma}\label{lem:21}
  There is a bijection 
  $$\psi: \bigcup_{1\leq k\leq [\frac{n}{2}]}\mathcal{H}_{n,k}\mapsto \bigcup_{1\leq k\leq [\frac{n}{2}]}\mathcal{M}_{n,2k}^{*U},$$
  such that $(M, P)\in \mathcal{H}_{n,k}$ if and only if $\psi(M, P)\in \mathcal{M}_{n,2k}^{*U}$. Especially, $P$ is a peak of $M$, if and only if $\psi(M, P)$ ends with an up step.
\end{lemma}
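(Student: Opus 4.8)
The plan is to build $\psi$ by reflecting the tail of the path that hangs off the distinguished hump. Given $(M,P)\in\mathcal{HM}_{n,k}$, I would first decompose $M$ around its marked hump as $M = A\,(UF^tD)\,B$, where $P=UF^tD$ is the hump, $A$ is the prefix ending at height $k-1$ just before the hump's up step, $t\ge 0$ is the number of flat steps of the hump, and $B$ is the suffix, which runs from height $k-1$ down to $0$ while staying weakly above the axis. I would then set
\[
  \psi(M,P) = A\,U\,\overline{DB}\,F^{t},
\]
that is: keep $A$ and the hump's up step, reverse-and-conjugate the down step together with the whole tail $B$, and move the $t$ flat steps to the very end.

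The first block of work is to check that $\psi(M,P)\in\mathcal{MP}_{n,2k}^{*U}$. The step count is visibly preserved. For the endpoint and the weak positivity, the key computation is the effect of $\overline{\,\cdot\,}$ on heights: if a factor $C$ of length $L$ sits in $M$ with heights $Y_0,\dots,Y_L$, then $\overline{C}$ started at height $z_0$ has heights $z_j=z_0+(Y_{L-j}-Y_L)$. Applying this to $C=DB$ (so $Y_0=k$, $Y_L=0$, and all $Y_i\ge 0$) started at height $z_0=k$ gives $z_j=k+Y_{L-j}\ge k$, ending at $2k$; hence $\overline{DB}$ never drops below $k$, and the concatenation $A\,U\,\overline{DB}\,F^t$ stays $\ge 0$ and terminates at height $2k$. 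Because $DB$ begins with $D$, its image $\overline{DB}$ ends with $\overline{D}=U$, so the last non-flat step of $\psi(M,P)$ is that $U$; thus $\psi(M,P)\in\mathcal{MP}_{n,2k}^{*U}$. The same observation settles the peak refinement: $P$ is a peak exactly when $t=0$, which is exactly when there are no trailing flat steps, i.e.\ when $\psi(M,P)$ ends with an up step.

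The heart of the argument is producing an explicit inverse, and this is where I expect the main difficulty to lie. Given $N\in\mathcal{MP}_{n,2k}^{*U}$, I would read off $k$ from the terminal height $2k$, strip the trailing flat steps to recover $t$ and a path $N'$ ending in $U$, and then split $N'$ at its last visit to height $k-1$ as $N'=A\,U\,R$. The delicate point is to show this decomposition is forced and reconstructs a genuine hump: since $N'$ rises from $0$ to $2k$ it must visit $k-1$, and after the last such visit the path can neither stay at nor descend below $k-1$ without revisiting it, so the following step is compelled to be $U$ and $R$ stays $\ge k$. Reversing the height identity shows $\overline{R}$ starts at $k$, stays $\ge 0$, ends at $0$, and begins with $D$ (as $R$ ends with $U$); writing $DB=\overline{R}$ and peeling off this leading $D$ yields $B$, and $M=A\,(UF^tD)\,B$ with marked hump $P=UF^tD$ at height $k$.

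Checking that the two maps compose to the identity in both directions is then routine bookkeeping. Finally, taking the disjoint union over $k$ is legitimate because the terminal height $2k$ of the image determines $k$, so the fibrewise bijections $\psi\colon\mathcal{HM}_{n,k}\to\mathcal{MP}_{n,2k}^{*U}$ assemble into the claimed global bijection respecting $k$, which gives the stated equivalence and its peak specialization.
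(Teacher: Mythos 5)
Your map is exactly the paper's bijection: writing $\overline{DB}=\overline{B}\,U$, your $A\,U\,\overline{DB}\,F^t$ is precisely the paper's $M_1U\overline{M_2}UF^r$, and your inverse (split at the last visit to height $k-1$) is also the paper's inverse. The proposal is correct and essentially identical in approach, just with the verification details (height bookkeeping and forcedness of the inverse decomposition) spelled out more explicitly than in the paper.
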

\begin{proof}
  Given $(M,P)\in \mathcal{H}_{n,k}$, we assume that $P=UF^rD$, and decompose $M$ as $M=M_1PM_2$. Let  
  $$\psi(M,P)=M_1U\overline{M_2}UF^r.$$
  It is not difficult to see that $\psi(M,P)\in \mathcal{M}_{n,2k}^{*U}$.  Especially, if $M=M_1PM_2$ with $P=UD$ a peak, then $\psi(M,P)=M_1U\overline{M_2}U$ ends with an up step. See Figure \ref{fig:1} for an example of $\psi$.
  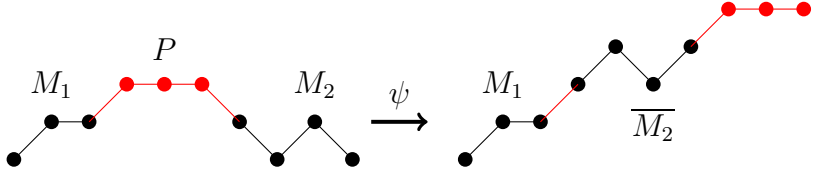
\begin{figure}[ht]
    \centering
    \begin{tikzpicture}[scale=0.5]
      \node[fill,circle,scale=0.5] at (0,0) (A1){};
      \node[fill,circle,scale=0.5] at (1,1) (A2){};
      \node[fill,circle,scale=0.5] at (2,1) (A3){};
      \node[red,fill,circle,scale=0.5] at (3,2) (A4){};
      \node[red,fill,circle,scale=0.5] at (4,2) (A5){};
      \node[red,fill,circle,scale=0.5] at (5,2) (A6){};
      \node[fill,circle,scale=0.5] at (6,1) (A7){};
      \node[fill,circle,scale=0.5] at (7,0) (A8){};
      \node[fill,circle,scale=0.5] at (8,1) (A9){};
      \node[fill,circle,scale=0.5] at (9,0) (A10){};
  
      \node[above=5pt] at (1,1) (){$M_1$};
      \node[above=5pt] at (4,2) (){$P$};
      \node[above=5pt] at (8,1) (){$M_2$};
      \draw plot coordinates {(A1) (A2) (A3)};
      \draw[red] plot coordinates {(A3) (A4) (A5) (A6) (A7)};
      \draw plot coordinates {(A7) (A8) (A9) (A10)};
  
      \path[line width=1.5pt, ->] (9.5,1) edge node [above] {$\psi$} (11,1);
  
      \node[fill,circle,scale=0.5] at (12,0) (B1){};
      \node[fill,circle,scale=0.5] at (13,1) (B2){};
      \node[fill,circle,scale=0.5] at (14,1) (B3){};
      \node[fill,circle,scale=0.5] at (15,2) (B4){};
      \node[fill,circle,scale=0.5] at (16,3) (B5){};
      \node[fill,circle,scale=0.5] at (17,2) (B6){};
      \node[fill,circle,scale=0.5] at (18,3) (B7){};
      \node[red,fill,circle,scale=0.5] at (19,4) (B8){};
      \node[red,fill,circle,scale=0.5] at (20,4) (B9){};
      \node[red,fill,circle,scale=0.5] at (21,4) (B10){};
      \draw plot coordinates {(B1) (B2) (B3)};
      \draw[red] plot coordinates {(B3) (B4)};
      \draw plot coordinates {(B4) (B5) (B6) (B7)};
      \draw[red] plot coordinates {(B7) (B8) (B9) (B10)};
  
      \node[above=5pt] at (13,1) (){$M_1$};
      \node[below=5pt] at (17,2) (){$\overline{M_2}$};
  
    \end{tikzpicture}
    \caption{An example of $\psi$ with $(M,P)\in \mathcal{H}_{9,2}$, where the hump $P$ is colored red.}
  \label{fig:1} 
  \end{figure}

  Conversely, given $M\in \mathcal{M}_{n,2k}^{*U}$, let $M=M_1UM_2UF^r$ be the decomposition of $M$, such that the rightmost endpoint of $M_1$ is the last endpoint of $M$ with $y$-coordinate $k-1$. Then we have $\psi^{-1}(M)=(M_1UF^rD\overline{M_2},P)$, where $P$ is the hump connecting $M_1$ and $\overline{M_2}$.
\end{proof}

Similar to the proof of Lemma \ref{lem:21}, we can give a combinatorial proof of Equation (\ref{equ:3}).
 \begin{proof}[Proof of Equation (\ref{equ:3})]
  Given $(M,P)\in \mathcal{H}_{n,k}$, assume that $P=UF^rD$. We decompose $M$ as $M=M_1PM_2$, and define  
  $$\psi_1(M,P)=M_2DM_1UF^r.$$
  It is not difficult to check that $\psi_1$ is a bijection from $\mathcal{H}_{n,k}$ to the set of free Motzkin paths of order $n$ whose smallest $y$-coordinate is $-k$ and last non-flat step is $U$, which implies Equation (\ref{equ:3}).
 \end{proof} 

Equation (\ref{equ:4}) and (\ref{equ:5}) can be deduced from Lemma \ref{lem:21}.
\begin{proof}[Proof of Equation (\ref{equ:4}) and (\ref{equ:5})]
   Given $P\in\mathcal{M}_{n,2k}^{*U}$, we assume that $P=M_1UH^r$, and define $\psi_2(P)=M_1$. It is not difficult to check that 
   $$\psi_2: \mathcal{M}_{n,2k}^{*U}\longrightarrow \bigcup\limits_{i=2k-1}^{n-1}\mathcal{M}_{i,2k-1}$$
   is a bijection. Thus, Equation (\ref{equ:4}) can be deduced from Lemma \ref{lem:21}. 
 
   On the other hand, given $P\in \mathcal{M}_{n,2k}$, we define $\psi_3(P)$ as follows.
   \begin{itemize}
    \item If $P\in\mathcal{M}_{n,2k}^{*U}$, we define $\psi_3(P)=P$.
    \item If the last non-flat step of $P$ is $D$, we define $\psi_3(P)$ to be the path obtained from $P$ by replacing the last $D$ step with $U$.
   \end{itemize}
  Then $\psi_3$ is a bijection from $\mathcal{M}_{n,2k}$ to the union of $\mathcal{M}_{n,2k}^{*U}$ and $\mathcal{M}_{n,2k+2}^{*U}$, and Equation (\ref{equ:5}) follows.
 \end{proof}

We are now ready to give the proof of Theorem \ref{thm:22}. A \emph{Dyck prefix} is a Motzkin prefix with no flat steps. It is well known that the number of Dyck paths of semilength $n$ ending with $UD^m$ is counted by $\frac{m}{n}\binom{2n-m-1}{n-1}$, which is also equal to the number of Dyck prefixes from $(0,0)$ to $(2n-m,m)$ ending with $U$. (See A33184 in OEIS \cite{Sloane}).

\begin{proof}[Proof of Theorem \ref{thm:22}]
  A path $M\in\mathcal{M}_{n,2k}^{*U}$ with $j$ flat steps can be obtained uniquely by inserting $j$ flat steps into $n-j+1$ positions of a Dyck prefix from $(0,0)$ to $(n-j,2k)$ ending with $U$. Especially, $M$ ends with an up step if and only if the above insertion is not allowed at the end of the path. Combining the above fact and Lemma \ref{lem:21}, we obtain the identities in Theorem \ref{thm:22}.     
\end{proof}
The first few entries of $(H_{n,k})_{n\geq 2,k\geq 1}$ are as follows.
\begin{align*}
  \begin{array}{rrrrrr}
    [2]&1;&&&&\\ \relax
    [3]&3;&&&&\\ \relax
    [4]&8,&1;&&&\\ \relax
    [5]&20,&5;&&&\\ \relax
    [6]&50,&19,&1;&&\\ \relax
    [7]&126,&63,&7;&&\\ \relax
    [8]&322,&196,&34,&1;&\\ \relax
    [9]&834,&588,&138,&9;&\\ \relax
    [10]&2187,&1728,&507,&53,&1.
  \end{array}
\end{align*}

\begin{remark}
  $(H_{n,k})_{n\geq 2,k\geq 1}$ and $(H_{n,1})_{n\geq 1}$ are, respectively, the sequences A379838 and A140662 in OEIS \cite{Sloane}.
\end{remark}

We can also consider the generating function of $(H_{n,k})_{n\geq 2,k\geq 1}$. For fixed $k$, it is well known that the generating function of $(M_{n,k})_{n\geq k}$ is
\begin{equation}\label{equ:2.1}
  \sum_{n\geq k}M_{n,k}x^n=x^kM^{k+1}(x),
\end{equation} 
where $M(x)$ is the generating function of the Motzkin numbers. See A064189 in OEIS \cite{Sloane} for instance. Combining Equation (\ref{equ:5}) and (\ref{equ:2.1}), we obtain the following result.
\begin{corollary}
  The infinite lower triangular array $(H_{n,k})_{n,k\in \mathbb{N}}$ is the Riordan array $(\frac{1}{1-x}, x^2M^2(x))$. Equivalently, for $k\geq 1$, we have
  $$\sum_{n\geq 2k}H_{n,k}x^n=\frac{x^{2k}M^{2k}(x)}{1-x},$$
  where $M(x)$ is the generating function of the Motzkin numbers.
\end{corollary}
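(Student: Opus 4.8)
The plan is to prove the explicit column generating function $\sum_{n\geq 2k}HM_{n,k}x^n=\frac{x^{2k}M^{2k}(x)}{1-x}$ directly, and then observe that this is precisely the assertion that $\{HM_{n,k}\}$ is the Riordan array $\bigl(\frac{1}{1-x},\,x^2M^2(x)\bigr)$. Recall that a Riordan array $(g(x),f(x))$ is by definition the lower triangular array whose $k$-th column has generating function $g(x)f(x)^k$; with $g(x)=\frac{1}{1-x}$ and $f(x)=x^2M^2(x)$ this column generating function is exactly $\frac{1}{1-x}\bigl(x^2M^2(x)\bigr)^k=\frac{x^{2k}M^{2k}(x)}{1-x}$. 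Hence the two displayed statements in the corollary are literally the same, and it suffices to establish the generating-function identity.

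To do this I would take the first recurrence of Corollary \ref{cor:1}, namely $HM_{n,k}=\sum_{m=2k-1}^{n-1}MP_{m,2k-1}$, as the starting point, since it expresses $HM_{n,k}$ as a partial sum of the sequence $\{MP_{m,2k-1}\}_m$, and passing to partial sums corresponds on the generating-function side to multiplication by $\frac{x}{1-x}$. Concretely, I would write
$$\sum_{n\geq 2k}HM_{n,k}x^n=\sum_{n\geq 2k}\Bigl(\sum_{m=2k-1}^{n-1}MP_{m,2k-1}\Bigr)x^n$$
and interchange the order of summation. Since $MP_{m,2k-1}=0$ for $m<2k-1$, the inner constraint $2k-1\le m\le n-1$ together with $n\ge 2k$ is equivalent to $m\ge 2k-1$ and $n\ge m+1$, so summing the geometric series $\sum_{n\ge m+1}x^n=\frac{x^{m+1}}{1-x}$ gives
$$\sum_{n\geq 2k}HM_{n,k}x^n=\frac{x}{1-x}\sum_{m\geq 2k-1}MP_{m,2k-1}x^m.$$

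Finally I would invoke Equation (\ref{equ:2.1}) with its parameter specialized to $2k-1$, which yields $\sum_{m\geq 2k-1}MP_{m,2k-1}x^m=x^{2k-1}M^{2k}(x)$, and substitute to obtain $\frac{x}{1-x}\cdot x^{2k-1}M^{2k}(x)=\frac{x^{2k}M^{2k}(x)}{1-x}$, as desired. I do not expect a genuine obstacle here: the whole argument is a routine order-swap followed by a single geometric-series summation. The only points needing a little care are verifying that the swapped summation bounds are correct — using the vanishing of $MP_{m,2k-1}$ below $m=2k-1$ to see that the condition $n\ge 2k$ imposes nothing extra once $n\ge m+1$ holds — and matching the indexing convention, so that the $k$-th column of the Riordan array is identified with $\{HM_{n,k}\}_n$ for $k\ge 1$, with the formal $k=0$ column $\frac{1}{1-x}$ supplying the all-ones leftmost column.
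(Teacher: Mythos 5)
Your proposal is correct and takes essentially the same route as the paper: the paper also derives the corollary by combining the partial-sum identity $HM_{n,k}=\sum_{m=2k-1}^{n-1}MP_{m,2k-1}$ from Corollary \ref{cor:1} with the column generating function $\sum_{n\geq k}MP_{n,k}x^n=x^kM^{k+1}(x)$ of Equation (\ref{equ:2.1}). The paper states this combination without writing out the details; your order-swap and geometric-series computation (and the remark on the formal $k=0$ column) simply makes explicit what the paper leaves implicit.
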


\section{The number of SYTs in a $(2,1)$-hook}\label{sec:3}
In this section, our first goal is to give a new combinatorial proof of Equation (\ref{equ:2}). Let $\mathcal{S}_k(2,1;n)$ denote the set of SYTs with shape $\lambda$, 
where $\lambda=(\lambda_1,\lambda_2,\dots)$ ranges over all $(2,1)$-hooks with $\mid \lambda\mid=n$ and $\lambda_1-\lambda_2=k$. Given $T\in \bigcup_{0\leq k\leq n-2}\mathcal{S}_k(2,1;n)$, we define $\phi(T)=w_1w_2\cdots w_n$, where
\begin{equation*}
  w_i=\left\{\begin{aligned}
    U,\ &\text{if $i$ appears in the first row of $T$;}\\
    D,\ &\text{if $i$ appears in the second row of $T$;}\\
    F,\ &\text{otherwise.}
  \end{aligned} \right.
\end{equation*}
It is obvious that
$$\phi: \bigcup_{0\leq k\leq n-2}\mathcal{S}_k(2,1;n) \mapsto \bigcup_{0\leq k\leq n-2}\mathcal{M}_{n,k}^{D*}$$
is a bijection. Therefore, to prove Equation (\ref{equ:2}), it is sufficient to prove the following result.
\begin{lemma}
  There is a bijection
  $$\varphi: \bigcup_{0\leq k\leq n-2}\mathcal{M}_{n,k}^{D*}\mapsto \bigcup_{1\leq k\leq [\frac{n}{2}]}\mathcal{M}_{n,2k}^{*U}.$$
\end{lemma}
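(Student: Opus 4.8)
The plan is to construct $\varphi$ by an explicit surgery that converts the \emph{front anchor} of a path in $\mathcal{MP}_{n,k}^{D*}$ into the \emph{back anchor} required on the target side, in the same spirit as the map $\psi$ of Lemma \ref{lem:21}. First I would record the structural meaning of the two conditions. A path $M\in\mathcal{MP}_{n,k}^{D*}$ has no flat step before its first down step, so it factors canonically as $M=U^{a}DN$ with $a\ge 1$, where $N$ is a Motzkin prefix running from height $a-1$ to height $k$; dually, a path in $\mathcal{MP}_{n,2k}^{*U}$ factors as $\cdots UF^{r}$, ending with an up step followed by flats. Before building the map I would note that the two families have already been shown to be equinumerous: via $\phi$ the left-hand union is the image of all SYTs in a $(2,1)$-hook with $\lambda_2\ge 1$, hence has size $S(2,1;n)-1=HM_n$ by \eqref{equ:2}, while by Lemma \ref{lem:21} the right-hand union has size $\sum_k HM_{n,k}=HM_n$. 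So a bijection exists; the task is to make it canonical.

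The main structural warning, which dictates the shape of the construction, is that $\varphi$ cannot preserve the terminal height: already for $n=3$ the three sources end at heights $\{0,1,1\}$ while every target ends at height $2$. In particular the parity of the terminal height must be corrected, and since reversal and the swap-operation $\overline{\cdot}$ only fix or negate the net height, no rearrangement of a fixed step-multiset can achieve this. Hence the distinguished leading down step $D$ of $M=U^{a}DN$ must itself be transmuted---either promoted to a $U$ or relaxed to an $F$---as it migrates to the tail. This is precisely the phenomenon already present in $\psi$, where the down step of a hump $UF^{r}D$ is consumed while a compensating up step is created at the end.

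Concretely, I would define $\varphi$ on $M=U^{a}DN$ by reversing-and-swapping the tail $N$ (applying $\overline{\cdot}$) and re-attaching the pieces so that the resulting walk terminates with an up step possibly followed by the liberated flats, the leading block $U^{a}$ and the flipped tail $\overline{N}$ being arranged to keep the walk on or above the $x$-axis. To control the parity of the terminal height I expect to split into two cases according to the local shape at the anchor, in the same manner as the piecewise map $\rho_2$ in the proof of Corollary \ref{cor:1} splits $\mathcal{MP}_{n,2k}$ according to whether the last non-flat step is already $U$. The inverse is then read off by the device used in Lemma \ref{lem:21}: on a target path one locates a canonical vertex---the last endpoint attaining a prescribed height---which recovers the split between $U^{a}$ and $\overline{N}$, undoes the reversal, and reinstates the anchor as a leading $D$.

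I expect the genuine difficulty to lie not in writing down the pieces but in \emph{well-definedness and invertibility}, and these are coupled. One must show simultaneously that (i) the image never dips below the $x$-axis, so that it is an honest Motzkin prefix; (ii) its terminal height is even, so that it lands in $\mathcal{MP}_{n,2k}^{*U}$ for some $k$; and (iii) the canonical vertex used to invert is unambiguous and consistent across the two cases, so that no two sources collide---a collision being easy to produce with a naive ``convert the front $D$'' rule, which is exactly why both the reversal and the case split are essential. Verifying (i)--(iii) is where the non-negativity constraint interacts with the anchor conversion, and it is the crux of the argument; once these checks are in place, that $\varphi$ is a bijection---and with it a new combinatorial proof of \eqref{equ:2}---follows immediately.
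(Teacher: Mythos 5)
Your writeup correctly identifies the key obstruction (the terminal-height parity must change, so the distinguished leading $D$ has to be traded for an $F$ or a $U$), but it never actually produces the bijection: ``re-attaching the pieces \ldots arranged to keep the walk on or above the $x$-axis,'' together with ``I expect to split into two cases,'' is a plan for a map, not a map, and you then explicitly defer your points (i)--(iii) --- nonnegativity, evenness of the endpoint, and invertibility --- which are the entire content of the lemma. The opening equinumerosity remark also cannot serve even as scaffolding: it invokes Equation (\ref{equ:2}), and this lemma exists precisely to give a new combinatorial proof of (\ref{equ:2}), so that argument is circular; if mere existence of a bijection (by counting) were acceptable, the lemma would be pointless for the paper's purpose.

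More seriously, the device you commit to --- reverse-and-swap the tail $N$ of $M=U^{a}DN$, in analogy with $\psi$ --- provably cannot work. The analogy breaks exactly where you flag the ``crux'': in Lemma \ref{lem:21} the reversed block $M_2$ ends on the $x$-axis, so $\overline{M_2}$ placed at height $k$ runs at heights $\ge k$ and nonnegativity is automatic; here $N$ ends at the terminal height $k$, typically the global maximum, so $\overline{N}$ starts at its own maximum and must descend by $k-a+1$. Concretely, take $M=UDUUU\in\mathcal{MP}_{5,3}^{D*}$: then $U^{a}=U$, $\overline{N}=DDD$, and after transmuting the anchor $D$ into $U$ or $F$ the available steps are $\{U,U,D,D,D\}$ or $\{U,F,D,D,D\}$, every arrangement of which ends at height $-1$ or $-2$; no ordering of your pieces is even a Motzkin prefix, let alone one ending at positive even height with last non-flat step $U$. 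The paper's proof uses no reversal at all; it is a purely local surgery: for $k$ odd, $U^{r}DM_1\mapsto U^{r-1}FM_1U$; for $k$ even, $M\mapsto M$ if the last non-flat step of $M$ is $U$, while $U^{r_1}DM_1DF^{r_2}\mapsto U^{r_1-1}FM_1UF^{r_2+1}$ and $U^{r_1}DF^{r_2}\mapsto U^{r_1+1}F^{r_2}$. Under these moves every vertex keeps or raises its height, so the image stays above the $x$-axis; the $D\to F$ (or $D\to U$) trade corrects the parity; and the inverse is read off from whether the first non-up step of the image is $D$ or $F$ (with the extreme case $U^{2k}F^{n-2k}$ handled separately). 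Any repair of your outline should abandon the reversal and look for case-by-case local moves of this kind.
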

\begin{proof}
  Given $0\leq k\leq n-2$ and $M\in \mathcal{M}_{n,k}^{D*}$, we define $\varphi(M)$ as follows.
  \begin{itemize}
    \item If $k$ is odd, we assume that $M=U^{r}DM_1$, and define $\varphi(M)=U^{r-1}FM_1U$.
    \item If $k$ is even, we define $\varphi(M)$ as follows.
    \begin{itemize}
      \item[*] If the last non-flat step of $M$ is $U$, then $\varphi(M)=M$.
      \item[*] If $M=U^{r_1}DM_1DF^{r_2}$, then $\varphi(M)=U^{r_1-1}FM_1UF^{r_2+1}$.
      \item[*] If $M=U^{r_1}DF^{r_2}$, then $\varphi(M)=U^{r_1+1}F^{r_2}$. 
    \end{itemize}
  \end{itemize}

  Conversely, given $1\leq k\leq [\frac{n}{2}]$ and $M\in \mathcal{M}_{n,2k}^{*U}$, we can obtain $\varphi^{-1}(M)$ in the following way.
  \begin{itemize}
    \item If the first non-up step of $M$ is $D$, we have $\varphi^{-1}(M)=M$. 
    \item If the first non-up step of $M$ is $F$ or $M=U^n$, we obtain $\varphi^{-1}(M)$ as follows.
    \begin{itemize}
      \item[*] If $M=U^rFM_1U$, then $\varphi^{-1}(M)=U^{r+1}DM_1$.
      \item[*] If $M=U^{r_1}FM_1UF^{r_2}$ with $r_2\geq 1$, then
       $$\varphi^{-1}(M)=U^{r_1+1}DM_1DF^{r_2-1}.$$
      \item[*] If $M=U^{2k}F^{n-2k}$, then $\varphi^{-1}(M)=U^{2k-1}DF^{n-2k}$.
    \end{itemize}
  \end{itemize} 
\end{proof} 

Combining the three bijections $\psi, \varphi^{-1}$ and $\phi^{-1}$ together, 
we obtain a combinatorial proof of Equation (\ref{equ:2}). See Figure \ref{fig:2} for an example.
\begin{figure}[th]
    \centering
    \begin{tikzpicture}[scale=0.5]
      \node[fill,circle,scale=0.5] at (0,0) (A1){};
      \node[fill,circle,scale=0.5] at (1,1) (A2){};
      \node[fill,circle,scale=0.5] at (2,1) (A3){};
      \node[red,fill,circle,scale=0.5] at (3,2) (A4){};
      \node[red,fill,circle,scale=0.5] at (4,2) (A5){};
      \node[red,fill,circle,scale=0.5] at (5,2) (A6){};
      \node[fill,circle,scale=0.5] at (6,1) (A7){};
      \node[fill,circle,scale=0.5] at (7,0) (A8){};
      \node[fill,circle,scale=0.5] at (8,1) (A9){};
      \node[fill,circle,scale=0.5] at (9,0) (A10){};
  
      \node[above=0pt] at (4,0) () {$(M,P)$};
      \draw[line width=1.5pt, ->] (4,-0.5) --node [right] {$\varPhi$} (4,-2);

      \draw plot coordinates {(A1) (A2) (A3)};
      \draw[red] plot coordinates {(A3) (A4) (A5) (A6) (A7)};
      \draw plot coordinates {(A7) (A8) (A9) (A10)};

      \draw[line width=1.5pt, ->] (9.5,1) --node [above] {$\psi$} (11.5,1);

      \node[fill,circle,scale=0.5] at (12,0) (B1){};
      \node[fill,circle,scale=0.5] at (13,1) (B2){};
      \node[fill,circle,scale=0.5] at (14,1) (B3){};
      \node[fill,circle,scale=0.5] at (15,2) (B4){};
      \node[fill,circle,scale=0.5] at (16,3) (B5){};
      \node[fill,circle,scale=0.5] at (17,2) (B6){};
      \node[fill,circle,scale=0.5] at (18,3) (B7){};
      \node[fill,circle,scale=0.5] at (19,4) (B8){};
      \node[fill,circle,scale=0.5] at (20,4) (B9){};
      \node[fill,circle,scale=0.5] at (21,4) (B10){};
      \draw plot coordinates {(B1) (B2) (B3)};
      \draw plot coordinates {(B3) (B4)};
      \draw plot coordinates {(B4) (B5) (B6) (B7)};
      \draw plot coordinates {(B7) (B8) (B9) (B10)};

      \draw[line width=1.5pt, ->] (16,-0.5) --node [right] {$\varphi^{-1}$} (16,-2);

      \node[fill,circle,scale=0.5] at (12,-5.5) (C1){};
      \node[fill,circle,scale=0.5] at (13,-4.5) (C2){};
      \node[fill,circle,scale=0.5] at (14,-3.5) (C3){};
      \node[fill,circle,scale=0.5] at (15,-4.5) (C4){};
      \node[fill,circle,scale=0.5] at (16,-3.5) (C5){};
      \node[fill,circle,scale=0.5] at (17,-2.5) (C6){};
      \node[fill,circle,scale=0.5] at (18,-3.5) (C7){};
      \node[fill,circle,scale=0.5] at (19,-2.5) (C8){};
      \node[fill,circle,scale=0.5] at (20,-3.5) (C9){};
      \node[fill,circle,scale=0.5] at (21,-3.5) (C10){};

      \draw plot coordinates {(C1) (C2) (C3)(C4) (C5) (C6) (C7)(C8) (C9) (C10)};
  
      \draw[line width=1.5pt, ->]  (11.5,-4)--node [above] {$\phi^{-1}$} (9.5,-4);
      \foreach \x in {2.5,...,6.5}
        \foreach \y in {-3}
        {
          \draw (\x,\y) +(-.5,-.5) rectangle ++(.5,.5);
        }
      \foreach \x in {2.5,...,4.5}
        \foreach \y in {-4}
        {
          \draw (\x,\y) +(-.5,-.5) rectangle ++(.5,.5);
        }

      \draw (2.5,-5) +(-.5,-.5) rectangle ++(.5,.5);

      \draw (2.5,-3) node{1};
      \draw (3.5,-3) node{2};
      \draw (4.5,-3) node{4};
      \draw (5.5,-3) node{5};
      \draw (6.5,-3) node{7};

      \draw (2.5,-4) node{3};
      \draw (3.5,-4) node{6};
      \draw (4.5,-4) node{8};

      \draw (2.5,-5) node{9};
    \end{tikzpicture}
    \caption{An example of $\varPhi$ with $(M,P)\in \mathcal{H}_{9,2}$, where the hump $P$ is colored red.}
  \label{fig:2} 
\end{figure}

Next we calculate $\SYT_k(2,1;n)$. Let 
$$T_{n,k}=\SYT_{k}(2,1;n)+(-1)^{n+k+1},$$
where we set $T_{0,0}=0$. We have the following result for the generating function of $(T_{n,k})_{n\geq k}$ for fixed $k$.
\begin{lemma}\label{lem:33}
  The infinite lower triangular array $(T_{n,k})_{n,k\in \mathbb{N}}$ is the Riordan array $(\frac{xM(x)}{1+x}, xM(x))$. Equivalently, we have
  $$T_k(x)=\sum_{n\geq k} T_{n,k}x^n=\frac{1}{1+x}(xM(x))^{k+1},$$
  where $M(x)$ is the generating function of the Motzkin numbers.
\end{lemma}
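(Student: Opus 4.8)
The plan is to compute each column generating function $S_k(x)=\sum_{n\ge k}S_{n,k}x^n$ and identify it with $\frac{xM(x)}{1+x}\bigl(xM(x)\bigr)^{k}$; since this is $g(x)f(x)^k$ for $g=\frac{xM}{1+x}$ and $f=xM$, it is by definition the asserted Riordan column, and the two formulations in the lemma coincide. Everything is driven by the numbers $b_{n,k}:=\#\mathcal{MP}_{n,k}^{D*}$, so the first task is to express $S_{n,k}$ through them. Using the bijection $\phi$, every shape with two or more rows counted by $S_k(2,1;n)$ corresponds to a prefix in $\mathcal{MP}_{n,k}^{D*}$, and such shapes exist exactly for $0\le k\le n-2$; for $k=n-1$ there is no admissible shape, while for $k=n$ the single-row shape $(n)$ contributes $1$ and lies outside the image of $\phi$. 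Hence $S_k(2,1;n)=b_{n,k}+\chi(k=n)$ for all $n\ge k$. Substituting into $S_{n,k}=S_k(2,1;n)+(-1)^{n+k+1}$ and summing the resulting geometric sign series gives
\[
  S_k(x)=B_k(x)+\frac{x^{k+1}}{1+x},\qquad B_k(x):=\sum_{n}b_{n,k}\,x^n .
\]

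The combinatorial core is a closed form for $B_k$. I would classify each prefix in $\mathcal{MP}_{m,k}$ by its first non-up step: either there is none (the all-up path $U^m$, which occurs only when $k=m$), or it is an $F$, or it is a $D$. Writing $c_{m,k}$ for the number of prefixes whose first non-up step is $F$, this partition reads $MP_{m,k}=b_{m,k}+c_{m,k}+\chi(k=m)$. The key observation is the length-raising bijection
\[
  \alpha:\ U^{r}FN\longmapsto U^{r+1}DN
\]
from the $F$-first prefixes of length $m$ onto $\mathcal{MP}_{m+1,k}^{D*}$: it leaves the suffix $N$ and the final height unchanged, turns the first $F$ into the first $D$, and is manifestly invertible, so $c_{m,k}=b_{m+1,k}$. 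Therefore $b_{m,k}+b_{m+1,k}=MP_{m,k}-\chi(k=m)$ for every $m$.

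Passing to generating functions and invoking the classical identity $\sum_{m}MP_{m,k}x^m=x^kM^{k+1}(x)$ of Equation~(\ref{equ:2.1}) turns the last recurrence into $\frac{1+x}{x}B_k(x)=x^k\bigl(M^{k+1}(x)-1\bigr)$, that is,
\[
  B_k(x)=\frac{(xM)^{k+1}-x^{k+1}}{1+x}.
\]
Combining this with the first display, the spurious term $-x^{k+1}$ is cancelled exactly by the correction $+x^{k+1}$, and I obtain
\[
  S_k(x)=\frac{(xM)^{k+1}}{1+x}=\frac{xM}{1+x}\,(xM)^k,
\]
which is the $k$th column of the Riordan array $(\tfrac{xM}{1+x},\,xM)$, completing the proof. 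Notably this route uses only Equation~(\ref{equ:2.1}) and needs neither the defining equation of $M$ nor any kernel-method computation.

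I expect the main obstacle to lie entirely in the first paragraph, namely in pinning down $S_k(2,1;n)=b_{n,k}+\chi(k=n)$ uniformly in $n\ge k$. The bijection $\phi$ only covers the range $0\le k\le n-2$, so the two extreme diagonals $k=n-1$ and $k=n$ must be analysed by hand; these are precisely the places where the shift between $S_k(2,1;n)$ and $S_{n,k}$ (the summand $(-1)^{n+k+1}$ together with the single-row contribution $\chi(k=n)$) is engineered so that the final cancellation $-x^{k+1}+x^{k+1}=0$ occurs. Once the identity $S_k(x)=B_k(x)+\frac{x^{k+1}}{1+x}$ is correctly set up, the bijection $\alpha$ and the remaining algebra are routine.
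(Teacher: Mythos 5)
Your proposal is correct and takes essentially the same route as the paper: the engine of both arguments is the bijection $U^{r}FN\mapsto U^{r+1}DN$ (the paper's map $f$) sending prefixes whose first non-up step is $F$ to prefixes of length one greater whose first non-up step is $D$, followed by an appeal to Equation (\ref{equ:2.1}) to convert the resulting two-term recurrence into the stated generating function. The only difference is bookkeeping: the paper absorbs the sign $(-1)^{n+k+1}$ into $S_{n,k}$ and proves $S_{n+1,k}+S_{n,k}=MP_{n,k}$ directly (checking the base case $n=k$ by hand), whereas you work with $b_{n,k}=\#\mathcal{MP}_{n,k}^{D*}$ and reinstate the boundary term $\chi(k=n)$ and the alternating signs at the generating-function level, where they cancel against $-x^{k+1}$.
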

\begin{proof}
For $n\geq k$, we first show that
  \begin{equation}\label{rec:31}
    T_{n+1,k}+T_{n,k}=M_{n,k}.
  \end{equation}
It is obvious that $T_{k,k}=0$ and $T_{k+1,k}=1$ for $k\geq 0$, which implies (\ref{rec:31}) for $n=k$. For $n> k$ and $M\in \mathcal{M}_{n,k}$, we define $\varphi_1(M)$ as follows.
\begin{itemize}
  \item If $M\in \mathcal{M}_{n,k}^{D*}$, we define $\varphi_1(M)=M$.
  \item If $M=U^rFM_1$, we define $\varphi_1(M)=U^{r+1}DM_1$.
\end{itemize}
It is not difficult to check that 
$$\varphi_1: \mathcal{M}_{n,k}\mapsto \mathcal{M}_{n+1,k}^{D*}\bigcup \mathcal{M}_{n,k}^{D*}$$ 
is a bijection. Since The left-hand side of (\ref{rec:31}) counts the number of paths in $\mathcal{M}_{n+1,k}^{D*}\bigcup \mathcal{M}_{n,k}^{D*}$, and the right-hand side counts the number of paths in $\mathcal{M}_{n,k}$, we obtain the recurrence relation (\ref{rec:31}).

Combining Equation (\ref{equ:2.1}) and Equation (\ref{rec:31}), we obtain Lemma \ref{lem:33}.
\end{proof}

\begin{remark}
  For $k=0$ and $k=1$, $(T_{n,k})_{n\geq k}$ are, respectively, the sequences A187306 and A284778 in OEIS \cite{Sloane}.
\end{remark}

The proof of Equation (\ref{rec:31}) also implies a combinatorial proof of the following recurrence relation.
\begin{corollary}
For $1\leq k\leq  [\frac{n}{2}]$, we have
  $$\SYT_{2k-1}(2,1;n+1)+\SYT_{2k-1}(2,1;n)=H_{n+1,k}-H_{n,k}.$$
\end{corollary}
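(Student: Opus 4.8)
The plan is to show that both sides of the asserted identity are equal to $MP_{n,2k-1}$, the cardinality of $\mathcal{MP}_{n,2k-1}$. The two arithmetic facts that drive everything are that the sign corrections hidden in the definition $S_{n,k}=S_k(2,1;n)+(-1)^{n+k+1}$ cancel when one sums over two consecutive values of the first index, and that the right-hand side telescopes once Equation (\ref{equ:4}) is invoked.

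First I would rewrite the left-hand side in terms of the corrected quantities $S_{n,k}$. Since $S_k(2,1;m)=S_{m,k}-(-1)^{m+k+1}$, we have
$$S_{2k-1}(2,1;n+1)+S_{2k-1}(2,1;n)=\bigl(S_{n+1,2k-1}+S_{n,2k-1}\bigr)-\bigl((-1)^{n+2k}+(-1)^{n+2k+1}\bigr),$$
and the two sign terms are negatives of each other (as $2k$ is even, they are just $(-1)^n$ and $-(-1)^n$), hence cancel. Thus the left-hand side equals $S_{n+1,2k-1}+S_{n,2k-1}$, which by the recurrence (\ref{rec:31}) with $k$ replaced by $2k-1$ is exactly $MP_{n,2k-1}$. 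Equivalently, and in the combinatorial spirit of the proof of (\ref{rec:31}), the map $f$ constructed there restricts to a bijection between $\mathcal{MP}_{n+1,2k-1}^{D*}\cup\mathcal{MP}_{n,2k-1}^{D*}$ and $\mathcal{MP}_{n,2k-1}$, so the left-hand side literally counts the latter set.

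For the right-hand side I would simply invoke Equation (\ref{equ:4}), namely $HM_{n,k}=\sum_{m=2k-1}^{n-1}MP_{m,2k-1}$. Writing out $HM_{n+1,k}$ and $HM_{n,k}$ and subtracting, the sum telescopes and leaves the single summand $MP_{n,2k-1}$. Combining the two computations yields $S_{2k-1}(2,1;n+1)+S_{2k-1}(2,1;n)=MP_{n,2k-1}=HM_{n+1,k}-HM_{n,k}$, as claimed.

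I do not expect a serious obstacle here; the argument is entirely index bookkeeping resting on two previously established results. The one point that must be checked with care is that the odd second index $2k-1$ appearing in $S_{2k-1}$ is precisely the second subscript of the Motzkin prefix numbers $MP_{m,2k-1}$ summed in Equation (\ref{equ:4}). This alignment, together with the sign cancellation noted above, is the structural reason the additive left-hand side pairs naturally with the subtractive (telescoping) right-hand side.
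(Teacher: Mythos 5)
Your proof is correct, and its overall plan coincides with the paper's: both reduce the two sides of the identity to the common quantity $MP_{n,2k-1}$. Your treatment of the left-hand side --- cancelling the correction signs $(-1)^{n+2k}$ and $(-1)^{n+2k+1}$ and then applying the recurrence (\ref{rec:31}) with second index $2k-1$ --- is exactly what the paper does (the paper phrases it as ``by the proof of (\ref{rec:31})'', i.e.\ via $\phi$ and the bijection $f$, which is the combinatorial counterpart of your sign-cancellation computation). The only divergence is on the right-hand side: you telescope Equation (\ref{equ:4}), whereas the paper argues directly from Lemma \ref{lem:21}: $HM_{n+1,k}-HM_{n,k}$ counts the paths in $\mathcal{MP}_{n+1,2k}^{*U}$ ending with an up step (the remaining ones, ending with a flat step, biject with $\mathcal{MP}_{n,2k}^{*U}$ by deleting that step), and deleting the final up step puts these in bijection with $\mathcal{MP}_{n,2k-1}$. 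The two routes are essentially equivalent, since Equation (\ref{equ:4}) was itself deduced from Lemma \ref{lem:21} via the bijection $\rho_1$, which likewise strips the last up step and trailing flat steps; yours is pure bookkeeping on previously established identities, while the paper's stays bijective end to end, in keeping with its stated aim of combinatorial proofs. One small point worth making explicit in your write-up: applying (\ref{equ:4}) to both $HM_{n,k}$ and $HM_{n+1,k}$, and applying (\ref{rec:31}) with index $2k-1$, requires $n\geq 2k$, which is guaranteed by the hypothesis $1\leq k\leq [\frac{n}{2}]$.
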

\begin{proof}
  By the proof of (\ref{rec:31}), the left-hand side counts the number of paths in $\mathcal{M}_{n,2k-1}$. By Lemma \ref{lem:21}, the right-hand side counts the number of paths in $\mathcal{M}_{n+1,2k}^{*U}$ ending with $U$, which is also equal to the number of paths in $\mathcal{M}_{n,2k-1}$.
\end{proof}

We are now ready to give the proof of Theorem \ref{thm:33}. The weight $w$ of a Motzkin prefix $M\in \mathcal{M}_{n,k}^{*U}$ is defined as $w(M)=(-1)^r$, where $r$ is the number of flat steps after the last up step.
\begin{proof}[Proof of Theorem \ref{thm:33}]
  By Lemma \ref{lem:33}, we have 
$$T_{n,k}=\sum_{M\in \mathcal{M}_{n,k+1}^{*U}}w(M).$$

Given $M\in \mathcal{M}_{n,k+1}^{*U}$ with weight $-1$, we define $\varphi_2(M)$ to be the path obtained from $M$ by moving the last flat step to the beginning. Then $\varphi_2$ is a bijection from the set of paths in $\mathcal{M}_{n,k+1}^{*U}$ with weight $-1$ to those with weight $1$ beginning with a flat step. 

As a consequence of the above bijection, $T_{n,k}$ equals the number of paths in $\mathcal{M}_{n,k+1}^{*U}$ beginning with $U$ and ending with an even number of flat steps. Since those paths can be obtained from the corresponding Dyck prefixes by inserting an even number of flat steps to the end and inserting some flat steps between the first up step and the last up step, we obtain the identity in Theorem \ref{thm:33}.
\end{proof}

The first few entries of $(\SYT_k(2,1;n))_{n,k\in\mathbb{N}}$ are as follows.
\begin{align*}
  \begin{array}{rrrrrrrrrrr}
    [0]&1;&&&&&&&&\\ \relax
    [1]&0,&1;&&&&&&&\\ \relax
    [2]&1,&0,&1;&&&&&&\\ \relax
    [3]&1,&2,&0,&1;&&&&&\\ \relax
    [4]&3,&3,&3,&0,&1;&&&&\\ \relax
    [5]&6,&9,&6,&4,&0,&1;&&&\\ \relax
    [6]&15,&21,&19,&10,&5,&0,&1;&&\\ \relax
    [7]&36,&55,&50,&34,&15,&6,&0,&1;&\\ \relax
    [8]&91,&141,&139,&99,&55,&21,&7,&0,&1.
  \end{array}
\end{align*}

\begin{remark}
  $(\SYT_0(2,1;n))_{n\geq k}$ and $(\SYT_0(2,1;n))_{n\geq 0}$ are, respectively, the sequences A379893 and A005043 in OEIS \cite{Sloane}.
\end{remark}

\section*{Declaration of Competing Interest}
The authors declare that they have no known competing financial interests or personal relationships that could have appeared to influence the work reported in this paper.

\section*{Acknowledgements}
The author was supported by the Natural Science Foundation of Hunan Province (2021JJ40186). The author is grateful to anonymous for providing helpful advice that improved some of the paper's results.

\end{sloppypar}
\end{document}